\documentclass[12pt]{amsart}
\usepackage{geometry} 
\usepackage{amsmath,amsthm,amsfonts,amssymb}
\usepackage{graphicx}
\geometry{a4paper} 
\usepackage[utf8]{inputenc}


\newtheorem{lemma}{Lemma}[section]
\newtheorem{proposition}[lemma]{Proposition}
\newtheorem{theorem}[lemma]{Theorem}
\newtheorem{conjecture}[lemma]{Conjecture}
\newtheorem{corollary}[lemma]{Corollary}

\theoremstyle{definition}
\newtheorem{definition}[lemma]{Definition}
\newtheorem{remark}[lemma]{Remark}

\newcommand{\Z}{\mathbb{Z}}

\newcommand{\Q}{\mathbb{Q}}
\newcommand{\C}{\mathbb{C}}

\title{Algebraic generators of the skein algebra of a surface}

\author{Ramanujan Santharoubane}

\date{\today}

\begin{document}
\address{Department of Mathematics, University of Virginia, Charlottesville, VA 22904-4137, USA}
\email{ramanujan.santharoubane@gmail.com}
 
\maketitle
\begin{abstract} Let $\Sigma$ be a surface with negative Euler characteristic, genus at least one and at most one boundary component. We prove that the Kauffman bracket skein algebra of $\Sigma$ over the field of rational functions can be algebraically generated by a finite number of simple closed curves that are naturally associated to certain generators of the mapping class group of $\Sigma$. The action of the mapping class group on the skein algebra gives canonical relations between these generators. From this, we conjecture a presentation for a skein algebra of $\Sigma$.

\end{abstract}
\section{Introduction}
\subsection{Main results}
This paper is focused on finding algebraic generators of the Kauffman bracket skein algebra of a surface. Throughout this paper, we will refer to the Kauffman bracket skein algebra simply as the skein algebra. Let $\Sigma$ be a compact oriented connected surface of genus at least one and with at most one boundary component. Moreover we will suppose that $\Sigma$ has negative Euler characteristic. We denote by $\mathcal{S}(\Sigma,\Q(A))$ the skein module of $\Sigma \times [0,1]$ with coefficients in the field of rational function $\Q(A)$ and by $S(\Sigma)$ the skein module over $\Z[A^{\pm 1}]$. The module $\mathcal{S}(\Sigma,\Q(A))$ is equipped with a natural product given by stacking banded links. For $\gamma$ is a simple closed curve on $\Sigma$, we write $\gamma$ for the element $\gamma \times \left[\frac{1}{2},\frac{2}{3}\right]$ in $\mathcal{S}(\Sigma,\Q(A))$ and we denote by $t_{\gamma}$ the Dehn twist along $\gamma$. 

\begin{theorem} \label{theorem1}
Let $\{ \gamma_j \}_{j \in I}$ be a finite set of non separating simple closed curves such that 
\begin{enumerate}
\item For any $i,j \in I$, the curves $\gamma_i$ and $\gamma_j$ intersect at most once.
\item The set $\{ t_{\gamma_j} \}_{j \in I}$ generates the mapping class group of $\Sigma$.
\end{enumerate}
Then the algebra generated by $\{\gamma_j\}_{j \in I}$ is $\mathcal{S}(\Sigma,\Q(A))$. Moreover if $\xi$ is a non zero complex number such that $\xi^4 \neq 1$ then the set $\{\gamma_j\}_{j \in I}$ generates $\mathcal{S}(\Sigma)\underset{A=\xi}{\otimes} \C$ as an algebra.
\end{theorem}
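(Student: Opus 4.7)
The plan is to show that the subalgebra $\mathcal{A} \subseteq \mathcal{S}(\Sigma,\Q(A))$ generated by $\{\gamma_j\}_{j \in I}$ is invariant under the mapping class group action, that this forces $\mathcal{A}$ to contain every non-separating simple closed curve, and finally that such curves together with skein resolutions generate every basis multicurve. I would proceed in three conceptual steps, with the actual work concentrated in the last one.

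First I would establish the key one-crossing identity. If $\alpha,\beta$ are simple closed curves meeting transversely in a single point, then placing $\alpha$ above or below $\beta$ and applying the Kauffman bracket skein relation at the unique crossing yields
\[
\alpha\beta = A\,X_{+} + A^{-1}X_{-}, \qquad \beta\alpha = A^{-1}X_{+} + A\,X_{-},
\]
where $X_{\pm}$ are the two smoothings (equal, up to labeling, to $t_{\alpha}^{\pm 1}(\beta)$). The coefficient determinant $A^{2}-A^{-2}$ is invertible in $\Q(A)$, so each $X_{\pm}$ is a $\Q(A)$-combination of $\alpha\beta$ and $\beta\alpha$. Applied to any two generators, which intersect at most once by hypothesis~(1), this places $t_{\gamma_i}^{\pm 1}(\gamma_j)$ in $\mathcal{A}$ for all $i,j$.

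Next I would promote this to mapping-class-group invariance of $\mathcal{A}$. Since the mapping class group acts on $\mathcal{S}(\Sigma,\Q(A))$ by $\Q(A)$-algebra automorphisms, to check that $t_{\gamma_i}^{\pm 1}$ preserves $\mathcal{A}$ it suffices to check it on the generators, and this is exactly what the previous step provides. Hypothesis~(2) then lifts this to invariance under the entire mapping class group. Because $\mathrm{MCG}(\Sigma)$ acts transitively on isotopy classes of non-separating simple closed curves (the change-of-coordinates principle), every such curve is in the $\mathrm{MCG}$-orbit of some $\gamma_j$ and thus lies in $\mathcal{A}$.

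Finally, to reach the full skein algebra I would use that multicurves without contractible components form a $\Q(A)$-basis of $\mathcal{S}(\Sigma,\Q(A))$. A multicurve with only non-separating components is exactly the skein-algebra product of its (disjoint, hence commuting) components, so it lies in $\mathcal{A}$ at once. For a multicurve containing a separating component $c$, I would pick non-separating $\alpha,\beta$ whose union fills a neighborhood of $c$, expand $\alpha\beta$ and $\beta\alpha$ over all smoothings at the crossings of $\alpha\cap\beta$, and solve the resulting linear system for $c$ modulo multicurves of strictly lower complexity; an induction on that complexity then closes the argument. I expect this to be the main obstacle: the earlier steps are formal consequences of the one-crossing skein relation, the algebra-automorphism property of the Dehn twist action, and the change-of-coordinates principle, but here one must run a finite case analysis across the topological types of separating curves on $\Sigma$, exhibit the right $\alpha,\beta$ for each type, and fix a complexity measure on which the induction genuinely decreases.
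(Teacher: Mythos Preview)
Your first two steps are exactly the paper's proof: the one-crossing skein identity gives $t_{\gamma_i}^{\pm1}(\gamma_j)\in\mathcal{A}$, hence $\mathcal{A}$ is $\Gamma(\Sigma)$-stable, and transitivity on non-separating curves puts all of $\mathcal{C}_0$ into $\mathcal{A}$.

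For the separating curves the paper avoids your proposed complexity induction and instead reuses the mapping-class-group invariance you already have. Fix the standard curves $x_j,y_j$ on the $j$-th handle and let $\delta_j$ be the separating curve cutting off genus $j$; resolving $x_jy_j=Az_j+A^{-1}z_j'$ and then computing $z_j'z_j$ gives the identity
\[
z_j' z_j \;=\; A^{2}x_j^{2}+A^{-2}y_j^{2}+\delta_j+\delta_{j-1},\qquad \delta_0=-A^{2}-A^{-2},
\]
(this is \cite[Eq.~2.2]{BP}). Since $x_j,y_j,z_j,z_j'$ are all non-separating, they lie in $\mathcal{A}$, so by induction on $j$ each $\delta_j\in\mathcal{A}$. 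Because $\Sigma$ has at most one boundary component, every separating curve is $\Gamma(\Sigma)$-equivalent to some $\delta_{j_0}$, and $\Gamma(\Sigma)$-stability of $\mathcal{A}$ finishes the job. This is cleaner than what you sketch: your plan to ``pick $\alpha,\beta$ filling a neighborhood of $c$ and solve a linear system over all smoothings'' would require controlling $2^n$ resolution terms for $n=\iota(\alpha,\beta)\ge 2$ and setting up a well-founded complexity, whereas the paper reduces everything to a single explicit one-handle identity plus one more application of change-of-coordinates.
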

We recall that the mapping class group of $\Sigma$ is $\pi_0(\mathrm{Homeo}^{\small{+}}(\Sigma,\partial \Sigma))$.
We will now give an interpretation of some relations that should hold for the generators in the previous theorem.  Let us fix $\{ \gamma_j \}_{j \in I}$ a set of simple closed curves on $\Sigma$ satisfying the hypothesis of Theorem \ref{theorem1}. Let $\mathbb{Q}(A) \langle I \rangle $ be the free non commutative $\mathbb{Q}(A)$-algebra generated by $\{ X_j \}_{j \in I}$. The theorem says that there exists a surjective algebra homomorphism \begin{equation} \label{psi} \Psi : \mathbb{Q}(A) \langle I \rangle  \to \mathcal{S}(\Sigma,\Q(A))\end{equation} defined by: $$\Psi(X_j)=\frac{\gamma_j}{A^2-A^{-2}} \quad \forall j \in I$$
Now for $j \in I$ and $\epsilon \in \{-1,1\}$, let $T_j ^{\epsilon} : \mathbb{Q}(A) \langle I \rangle  \to \mathbb{Q}(A) \langle I \rangle $ be the homomorphism of $\Q(A)$-algebra defined by:
\begin{align}  
T_j^{\epsilon} (X_k) & = X_k  &\text{when} \, \,  \iota (\gamma_j , \gamma_k)= 0 \label{eq1} \\
 T_j^{\epsilon} (X_k) & =  \epsilon (A^{\epsilon} X_j X_k-A^{-\epsilon} X_k X_j)  &\text{when} \, \,  \iota (\gamma_j , \gamma_k)= 1 \label{eq2} \end{align}
Here $\iota$ is the geometric intersection of simple closed curves. We can easily check that $\Psi$ exchanges the actions of the $T_j$ and the $t_j$ in the sense that 
$$ \Psi(T_j^{\epsilon} X) = t_{\gamma_j}^{\epsilon} (\Psi(X)) \quad \quad \text{for all} \, \, X \in \mathbb{Q}(A) \langle I \rangle,  j \in I,\, \text{and} \, \, \epsilon\in\{-1,1\}$$
Let $\Gamma(\Sigma)$ be the mapping class group of $\Sigma$ and let $\bar{\Gamma}(\Sigma)$ be the group $\Gamma(\Sigma)$  modulo its center. Suppose that $I = \{1, \ldots , N\}$ and let us denote each $t_{\gamma_j}$ simply by $t_j$. 
Note that the map $t_j^{\epsilon} \mapsto T_j^{\epsilon}$ does not extend to an action of $\bar{\Gamma}(\Sigma)$ on $\mathbb{Q}(A) \langle I \rangle$. Indeed $\mathbb{Q}(A) \langle I \rangle$ is a free non commutative algebra and the relations in $\bar{\Gamma}(\Sigma)$ satisfied by the Dehn twists $\{t_j^{\pm1}\}_{j \in J}$ are clearly not satisfied by the operators $\{T_j^{\pm 1}\}_{j \in J}$. Hence the relations between the $\{t_j^{\pm1}\}_{j \in J}$ give relations between the generators $\{\gamma_j\}_{j \in J}$ in $\mathcal{S}(\Sigma,\Q(A))$. Suppose that $\bar{\Gamma}(\Sigma)$ has the following presentation with respect to the generators $\{t_j \}_{j \in I}$: 
$$\bar{\Gamma}(\Sigma) = \bigl\langle \, t_1, \ldots , t_N \, | \, R_1(t_1, \ldots, t_N) = \cdots = R_K(t_1, \ldots, t_N) = 1 \bigr\rangle $$ where $K$ is an integer and the $R_k(t_1,\ldots,t_N)$ are some words in $\{t_j^{\pm 1} \}_{j\in I}$. Let $\mathcal{R}$ be the bi-ideal of $\mathbb{Q}(A) \langle I \rangle $ generated by the elements

\begin{align}
& R_k (T_1, \ldots, T_N) X_{i} -X_i,  \quad \text{for} \, \, 1 \le i \le N \, \text{and} \, 1 \le k \le K  \notag \\
& T_j T_{j}^{-1} X_i-X_i,  \quad \text{for}\, \, 1 \le i,j  \le N   \notag  \\
&X_i X_j - X_j X_i,   \quad   \text{for} \, \,  \iota (\gamma_i , \gamma_j)= 0 \notag
\end{align}
We define \begin{equation} \label{eq3} \mathcal{A}(\Gamma(\Sigma)) =\frac{\mathbb{Q}(A) \langle I \rangle }{ \mathcal{R}} \end{equation} which is a quotient of $\mathbb{Q}(A) \langle I \rangle$ on which the actions of the $T_j^{\pm 1}$ extend to a canonical action of $\bar{\Gamma}(\Sigma)$.
A direct consequence of Theorem \ref{theorem1} is the following:
\begin{corollary}
The canonical map $$\mathcal{A}(\Gamma(\Sigma)) \to \mathcal{S}(\Sigma,\Q(A))$$ is surjective.

\end{corollary}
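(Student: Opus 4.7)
The plan is to show that the surjective algebra homomorphism $\Psi$ of (\ref{psi}) descends to the quotient $\mathcal{A}(\Gamma(\Sigma)) = \mathbb{Q}(A)\langle I\rangle/\mathcal{R}$; since $\Psi$ is already known to be surjective by Theorem~\ref{theorem1}, the descended map will automatically inherit surjectivity, which is exactly the content of the corollary. To produce the factorisation I would check that $\Psi$ annihilates each of the three families of generators of the bi-ideal $\mathcal{R}$.

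For the commutator generators $X_iX_j - X_jX_i$ with $\iota(\gamma_i,\gamma_j)=0$, I would invoke the standard fact that two disjoint simple closed curves commute in the skein algebra: one may realise $\gamma_i$ and $\gamma_j$ by disjoint representatives and then place them at distinct heights in $\Sigma\times[0,1]$, so no crossing occurs and $\gamma_i\gamma_j=\gamma_j\gamma_i$ in $\mathcal{S}(\Sigma,\Q(A))$. For the generators $T_jT_j^{-1}X_i - X_i$, the intertwining identity $\Psi\circ T_j^{\epsilon}=t_{\gamma_j}^{\epsilon}\circ\Psi$ recorded just after (\ref{eq2}) immediately yields $\Psi(T_jT_j^{-1}X_i)=t_jt_j^{-1}\Psi(X_i)=\Psi(X_i)$, so the difference maps to zero.

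The substantive family is $R_k(T_1,\ldots,T_N)X_i - X_i$. Iterating the intertwining identity yields
\begin{equation*}
\Psi\bigl(R_k(T_1,\ldots,T_N)X_i\bigr) \;=\; R_k(t_1,\ldots,t_N)\bigl(\Psi(X_i)\bigr),
\end{equation*}
and by the chosen presentation of $\bar{\Gamma}(\Sigma)$ the mapping class $R_k(t_1,\ldots,t_N)$ lies in the center $Z(\Gamma(\Sigma))$. The main (mild) obstacle is therefore to justify that the $\Gamma(\Sigma)$-action on $\mathcal{S}(\Sigma,\Q(A))$ factors through $\bar{\Gamma}(\Sigma)$; concretely, that the central Dehn twist along $\partial\Sigma$ in the bordered case, and the hyperelliptic involution in the closed genus-two case, act trivially on isotopy classes of framed links inside $\Sigma\times[0,1]$. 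This is classical: the boundary Dehn twist, extended by the identity on $[0,1]$, becomes isotopic to the identity through an isotopy of $\Sigma\times[0,1]$ that rotates a collar of $\partial\Sigma\times[0,1]$, while the hyperelliptic involution fixes every isotopy class of unoriented simple closed curve on $\Sigma_2$ and hence every diagrammatic generator of the skein algebra.

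Once the three families are dispatched, $\Psi$ factors through $\mathcal{A}(\Gamma(\Sigma))$ to give the canonical algebra homomorphism of the statement, and surjectivity is immediate from the surjectivity of $\Psi$ furnished by Theorem~\ref{theorem1}.
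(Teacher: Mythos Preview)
Your proposal is correct and matches the paper's approach: the paper simply declares the corollary ``a direct consequence of Theorem~\ref{theorem1}'' without further argument, and what you have written is precisely the routine verification that $\Psi$ vanishes on $\mathcal{R}$ (using the intertwining identity and the stated fact that the center of $\Gamma(\Sigma)$ acts trivially on the skein algebra) so that surjectivity descends from Theorem~\ref{theorem1}. In other words, you have supplied the details the paper leaves implicit.
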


\begin{conjecture} \label{c}
There exists a presentation of $\bar{\Gamma}(\Sigma)$ for which $\mathcal{A}(\Gamma(\Sigma))$ is isomorphic to $\mathcal{S}(\Sigma,\Q(A))$ as a non commutative $\Q(A)$-algebra.

\end{conjecture}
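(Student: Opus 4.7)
The plan is as follows. Theorem \ref{theorem1} already yields the surjection $\Psi : \mathbb{Q}(A)\langle I\rangle \twoheadrightarrow \mathcal{S}(\Sigma,\Q(A))$, and the corollary descends it to a surjection $\mathcal{A}(\Gamma(\Sigma)) \twoheadrightarrow \mathcal{S}(\Sigma,\Q(A))$ for \emph{any} presentation of $\bar{\Gamma}(\Sigma)$. The entire content of the conjecture is thus injectivity for some well-chosen presentation, which amounts to showing that every relation satisfied in the skein algebra by the $\gamma_j$ is a $\Q(A)$-linear consequence of (i) commutativity of disjoint generators and (ii) the images under the operators $T_j^{\pm 1}$ of the mapping class group relations.

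To attack this, I would first fix an explicit and economical presentation of $\bar{\Gamma}(\Sigma)$ compatible with the intersection hypothesis of Theorem \ref{theorem1} — a Humphries/Wajnryb-type presentation, or, in small cases such as the one-holed torus, the standard $SL_2(\Z)$-like presentation on two Dehn twists. The structural heart of the argument would then be to build inside $\mathcal{A}(\Gamma(\Sigma))$ a $\Q(A)$-spanning set indexed by simple multicurves on $\Sigma$. Since $\bar{\Gamma}(\Sigma)$ acts transitively on the set of non-separating simple closed curves, applying words in the $T_j^{\pm 1}$ to the classes of the $X_i$ produces elements of $\mathcal{A}(\Gamma(\Sigma))$ tagged by all non-separating simple closed curves. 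Products of these elements must then reduce, modulo $\mathcal{R}$, to $\Q(A)$-linear combinations of fixed noncommutative monomials in a chosen multicurve normal form. Comparing with the standard multicurve basis of $\mathcal{S}(\Sigma,\Q(A))$ would upgrade the spanning to a basis and force the surjection to be injective.

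The main obstacle, which I expect to be the hard technical core, is deriving the \emph{local} Kauffman skein identity from the \emph{global} mapping class group relations. The commutation relations are postulated by hand, but the crossing relation $L = A\,L_0 + A^{-1}L_\infty$ must emerge from identities $R_k(T_1,\dots,T_N) = 1$, and this is far from tautological. I would look for mapping class group relations whose topological support is concentrated on a small subsurface — chain, lantern, and $2$-chain relations on a genus-one subsurface are natural candidates — and compute the corresponding operators $R_k(T_1,\dots,T_N)$ on generators explicitly in $\mathbb{Q}(A)\langle I\rangle$ to show that they generate precisely the skein smoothings needed. An induction on the complexity of a multicurve (for instance on the Euler characteristic of a regular neighborhood) would then let one localize each verification to a subsurface already treated. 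A potential alternative route, should the direct derivation resist, is to study the $\bar{\Gamma}(\Sigma)$-action on the kernel of $\mathcal{A}(\Gamma(\Sigma)) \to \mathcal{S}(\Sigma,\Q(A))$ and exploit rigidity properties of natural $\bar{\Gamma}(\Sigma)$-representations to conclude that this kernel must vanish.
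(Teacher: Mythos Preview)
The statement you are attempting is labeled a \emph{Conjecture} in the paper, and the paper does not prove it in general. The only case handled is the one-holed torus (Section~5.1), where $\bar{\Gamma}(\Sigma)\cong\mathrm{PSL}_2(\Z)$ has the explicit presentation $\langle t_1,t_2\mid t_1t_2t_1=t_2t_1t_2,\ (t_1t_2t_1)^2=1\rangle$. There the argument is short and entirely external: one checks by hand that the defining relations of $\mathcal{A}(\Gamma(\Sigma))$ collapse to $[X_1,[X_2,X_1]_A]_A=X_2$ and $[X_2,[X_1,X_2]_A]_A=X_1$, and then one quotes the Bullock--Przytycki presentation of the skein algebra of the one-holed torus to identify the two algebras. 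No multicurve normal form is constructed, no local skein relation is derived from mapping class group relations, and no induction on subsurface complexity appears; the verification is a direct comparison with a known presentation from the literature.

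Your write-up is therefore not a proof but a research programme for the general case, and the gaps you flag are genuine. Surjectivity is indeed already available, and you are right that injectivity is the whole content. However, the key steps you propose --- building a multicurve-indexed spanning set inside $\mathcal{A}(\Gamma(\Sigma))$, reducing arbitrary products to a normal form modulo $\mathcal{R}$, and above all extracting the Kauffman crossing resolution $L_\times=A\,L_\infty+A^{-1}L_0$ from chain, lantern, or $2$-chain relations --- are open problems, not routine computations. There is at present no known mechanism by which the global relations in $\bar{\Gamma}(\Sigma)$ force the local skein smoothing, and your alternative route via rigidity of $\bar{\Gamma}(\Sigma)$-representations on the kernel is equally speculative. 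If your goal is to match what the paper actually establishes, restrict to the one-holed torus and compare directly with \cite{BP}; anything beyond that remains conjectural.
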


\subsection{Notes and references}

Bullock in \cite{B} was the first to find algebraic generators of the skein algebra of a surface. His generators are over $\Z[A^{\pm 1}]$ and not over $\Q(A)$. The number of his generators is exponential in the genus of the surface whereas here we have a linear number (by chosing the right generators of $\Gamma(\Sigma)$).

Finite generation was also prove by Abdiel and Frohman (see \cite[Thm 3.7]{AF}). In \cite{FB}, Frohman and Kania-Bartoszynska studied the skein algebra when $A$ is evaluated at a root of unity. They prove that it is generated over its center by a pair of subalgebras from pants decomposition. Their generators have some similarities with the one in the current paper.

Presentations of skein algebras of surfaces are only known in genus zero and one. In \cite{BP}, Bullock and Przytycki found such a presentation for the one holed torus, the four holed sphere and twice holed torus. They related some of these algebras to non standard deformations of lie algebras.

When $A$ is specialized to $-1$, it was shown by Bullock (see \cite{B97}) and Przytycki-Sikora (see \cite{PS}) that the skein algebra of a surface is isomorphic to the ring of algebraic functions of the $\mathrm{SL}(2,\C)$ character variety of the surface. Moreover, for $A = \sqrt{-1}$, March\'e (see \cite{Mar}) gave an homological interpretation of the skein algebra of the surface. Note that the map $\Psi$ defined in Equation (\ref{psi}) is not defined if $A$ is specialized to a $4$-th primitive root of unity. It is possible to see that if we specialize $A$ at a $4$-th root of unity in the algebra $\mathcal{A}(\Gamma(\Sigma))$ we find something different from the algebras studied by  Bullock, March\'e and Przytycki-Sikora.

Humphries generators (see \cite{H77}) and Lickorish generators (see \cite{L64}) are examples of generators of the mapping class groups satisfying the hypothesis of Theorem \ref{theorem1}. Moreover presentations for both of these generating sets are know, we refer to book \cite{Farb} of Farb and Margalit for more details.

We consider $\Gamma(\Sigma)$ quotiented by its center because the center of the mapping class group acts trivially on the skein algebra of the surface.

\section{Acknowledgements}
I want to thank C. Frohman, J. March\'e and G. Masbaum for helpful conversations.

\section{Quick review of the skein algebra}
 For any compact oriented manifold $M$ (maybe with boundary), we denote by $\mathcal{S}(M)$ the Kauffman bracket skein module with coefficients in $\Z[A^{\pm 1}]$. We recall that it is the free $\Z[A^{\pm1}]$-module generated by the set of isotopy classes of banded links in the interior of $M$ quotiented by the following so-called skein relations.  First
 \begin{equation} \label{KI} L_{\times} = A  L_{\infty} + A^{-1}  L_0 \end{equation} where $L_{\times}$, $L_0$, $ L_{\infty}$ are any three banded links in $M$ which are the same outside a small $3$-ball but differ inside as in Figure \ref{Ktriple}. In this case, the triple $L_{\times}$, $L_0$, $ L_{\infty}$ is called a Kauffman triple. The second relation satisfied in $\mathcal{S}(M)$ is $$L \cup D=-(A^2+A^{-2}) \, L$$ where $L$ is any link in $M$ and $D$ is a trivial banded knot. We define $\mathcal{S}(M, \Q(A))$ to be the $\Q(A)$-vector space $\mathcal{S}(M)\underset{}{\otimes} \Q(A)$ where the tensor product is made over $\Z[A^{\pm 1}]$.

\begin{figure}[htbp]
\includegraphics[width=3.2cm]{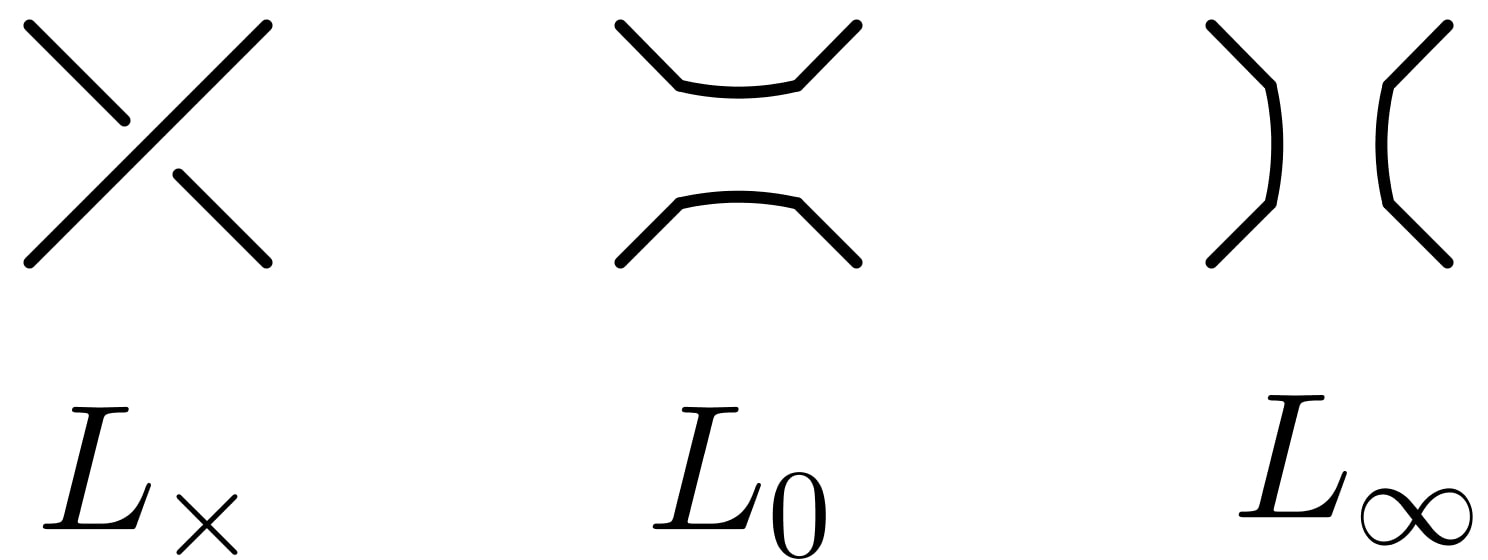}
\caption{Kauffman triple}\label{Ktriple}
\end{figure}

Let $\Sigma$ be a compact oriented connected surface, we denote by $\mathcal{S}(\Sigma,\Q(A))$ the space $\mathcal{S}(\Sigma \times [0,1],\Q(A))$. Stacking banded links on top of each other gives $\mathcal{S}(\Sigma,\Q(A))$ a structure of $\Q(A)$-algebra.

A multicurve is a disjoint union of non null-homotopic simple close curve inside $\Sigma$. For $\gamma \subset \Sigma$ a multicurve we write $\gamma$ for the banded link $\gamma \times \left[\frac{1}{2},\frac{2}{3}\right]$ in $\mathcal{S}(\Sigma,\Q(A))$ and we will still call this banded link a multicurve. A well known theorem is the following:

\begin{theorem}
The set of isotopy classes of multicurves on $\Sigma$ is a basis of the $\Q(A)$-vector space $\mathcal{S}(\Sigma,\Q(A))$.
\end{theorem}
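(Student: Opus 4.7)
The plan is to establish the two standard halves of a basis statement separately: first, that isotopy classes of multicurves span $\mathcal{S}(\Sigma,\Q(A))$, and second, that they are linearly independent over $\Q(A)$. The spanning part is essentially a diagrammatic reduction and is quite mechanical, whereas linear independence is where the real content lies.

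For spanning, I would take an arbitrary banded link $L \subset \Sigma \times [0,1]$ and project it generically onto $\Sigma$, recording over/under data at each double point. Order such diagrams by the lexicographic complexity $(c(D), n(D))$, where $c(D)$ is the number of crossings and $n(D)$ is the number of null-homotopic circles. If $c(D) > 0$, pick a crossing and apply the Kauffman relation (\ref{KI}); both $L_\infty$ and $L_0$ have strictly fewer crossings, so induction applies. If $c(D) = 0$ but $n(D) > 0$, choose an innermost null-homotopic component bounding a disk in $\Sigma$ (which, by an isotopy, can be arranged to be disjoint from the other components at heights slightly above or below $\tfrac{1}{2}$) and apply $L \cup D = -(A^2 + A^{-2}) L$ to reduce $n$. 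When both are zero we have a multicurve.

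For linear independence, the key idea is to build, for each multicurve class $[\gamma]$, a $\Q(A)$-linear functional $\phi_\gamma$ on $\mathcal{S}(\Sigma,\Q(A))$ that detects $[\gamma]$, i.e.\ satisfies $\phi_\gamma([\gamma']) = \delta_{[\gamma],[\gamma']}$ up to a nonzero factor. One way is to exploit a complexity filtration: order multicurves by geometric complexity (total intersection number with a fixed filling collection of arcs, or length in a pants decomposition), and show that in any putative relation $\sum c_i [\gamma_i] = 0$ obtained from diagrammatic moves, the leading term in this ordering survives non-trivially. Concretely, I would fix a triangulation or filling arc system of $\Sigma$, track intersection numbers with these arcs as an invariant that is preserved by taut position on each summand, and verify that the unique multicurve representatives in taut position with respect to the arc system all have distinct intersection profiles. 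Alternatively, I would deduce independence from the Bullock--Przytycki--Sikora identification of $\mathcal{S}(\Sigma)|_{A=-1}$ with the $\mathrm{SL}_2(\C)$ character variety coordinate ring together with a free-deformation argument over $\Q(A)$.

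The main obstacle is linear independence, and specifically the issue that the skein relations and trivial-circle relation together could, a priori, produce unexpected identifications among multicurves in the quotient. The elementary approach through a complexity/leading-term filtration requires care because the skein relation at a crossing produces two summands that may each individually have higher naive complexity than the sum before further reduction; one must therefore work with a \emph{minimal} representative in each isotopy class (taut position relative to a fixed auxiliary structure) and show this minimization commutes with the expansion. Executing this cleanly is the technical heart of the argument; once it is done, both halves combine to give the basis statement.
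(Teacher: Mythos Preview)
The paper does not actually prove this theorem: it is introduced with the phrase ``A well known theorem is the following'' and no argument is given. So there is no paper proof to compare against; you are supplying what the paper simply cites as background.

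Your outline is the standard one. The spanning argument via crossing reduction and removal of trivial components is exactly how it goes and is unproblematic. For linear independence you correctly locate the difficulty, but your sketch stops short of the mechanism that actually works in the literature: the usual route is a confluence (diamond-lemma) argument showing that the rewriting system given by the Kauffman and trivial-circle relations is locally confluent on link diagrams, so every link has a well-defined normal form as a $\Z[A^{\pm 1}]$-combination of multicurves; freeness over $\Z[A^{\pm 1}]$ then gives the $\Q(A)$-statement after tensoring. Your ``leading-term in a complexity filtration'' idea can be made to work, but as you yourself note, the naive complexity can go up under a single resolution, so one really needs the confluence viewpoint (or an equivalent normal-form argument) rather than a pure induction on intersection number. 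The alternative you mention, deducing independence from the $A=-1$ specialization to the $\mathrm{SL}_2(\C)$ character ring, is circular as stated: that identification already presupposes knowing a basis (or at least freeness) of the skein module, and in any case independence at a single value of $A$ does not by itself imply independence over $\Q(A)$ without first knowing the module is free over $\Z[A^{\pm 1}]$.
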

In particular this theorem clearly implies that simple closed curves generate $\mathcal{S}(\Sigma,\Q(A))$ as an algebra. Recall that $\Gamma(\Sigma) = \pi_0(\mathrm{Homeo}^{\small{+}}(\Sigma,\partial \Sigma))$ acts canonically on  $\mathcal{S}(\Sigma,\Q(A))$. Recall also that if $\gamma \subset \Sigma$ is a simple closed curve, we denote by $t_{\gamma}$ the Dehn twist along $\gamma$. The following lemma is standard and can be obtained by applying the skein relation (\ref{KI}).
\begin{lemma} \label{sk}
Let $\alpha$ and $\beta$ be two simple close curves intersecting once. We have in  $\mathcal{S}(\Sigma,\Q(A))$:
$$ t_{\alpha}^{\epsilon} (\beta) =  \dfrac{A^{\epsilon} \alpha \beta-A^{-\epsilon} \beta \alpha}{\epsilon(A^2-A^{-2})}$$
\end{lemma}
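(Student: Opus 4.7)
The strategy is a direct local computation at the single intersection point $p = \alpha \cap \beta$, using the Kauffman skein relation (\ref{KI}).

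First I would set up the relevant diagram. In the product $\alpha\beta$ inside $\mathcal{S}(\Sigma,\Q(A))$, the curve $\alpha$ sits at a strictly higher slice of $\Sigma \times [0,1]$ than $\beta$, so when projected to $\Sigma$ the configuration $\alpha \cup \beta$ has a single crossing at $p$ with a definite sign fixed by this height ordering; the product $\beta\alpha$ gives the opposite crossing at the same point. Thus $\alpha\beta$ and $\beta\alpha$ play the role of the two possible $L_\times$ terms in a Kauffman triple at $p$, whose smoothings $L_0$ and $L_\infty$ are obtained by locally desingularizing $\times$ into $)($ or $\asymp$.

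Next I would identify the two smoothings geometrically. Because $\alpha \cup \beta$ has a single transverse intersection, smoothing it either way merges $\alpha$ and $\beta$ into a single simple closed curve: a band sum of $\alpha$ and $\beta$ along a short arc near $p$. A direct local picture near $p$ shows that the two band sums are precisely $t_\alpha(\beta)$ and $t_\alpha^{-1}(\beta)$, since the Dehn twist replaces the portion of $\beta$ near $p$ by a detour that runs once along $\alpha$, and the two senses of detour correspond exactly to the two smoothings. Applying (\ref{KI}) to each of $\alpha\beta$ and $\beta\alpha$ then yields two linear equations of the form
\[
\alpha\beta = A\, t_\alpha(\beta) + A^{-1}\, t_\alpha^{-1}(\beta), \qquad
\beta\alpha = A^{-1}\, t_\alpha(\beta) + A\, t_\alpha^{-1}(\beta).
\]
Solving this $2\times 2$ system gives $t_\alpha(\beta) = (A\alpha\beta - A^{-1}\beta\alpha)/(A^2-A^{-2})$, and the analogous manipulation (or simply inverting the system) covers $\epsilon = -1$, producing the unified formula stated in the lemma.

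The only real subtlety is the identification in step two: matching each smoothing $L_0, L_\infty$ with the correct power $t_\alpha^{\pm 1}$ of the Dehn twist. This is purely local, but it requires carefully tracking the sign of the crossing produced by the stacking convention together with the direction of the twist; once orientations at $p$ are fixed, the remaining work is a two-line linear-algebra computation.
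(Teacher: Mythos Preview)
Your argument is correct and is exactly the approach the paper has in mind: the paper does not spell out a proof but simply remarks that the lemma ``is standard and can be obtained by applying the skein relation (\ref{KI}),'' and your resolution of $\alpha\beta$ and $\beta\alpha$ at the unique crossing followed by solving the resulting $2\times 2$ system is precisely that computation. The only point requiring care, as you note, is matching each smoothing with the correct power $t_\alpha^{\pm 1}(\beta)$, which is a local check of conventions.
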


\section{Proof of Theorem \ref{theorem1}}
Let $\Gamma(\Sigma)$ be the mapping class group of $\Sigma$. Let $\{ \gamma_j \}_{j \in I}$ be a set of simple closed curves satisfying the hypothesis of the Theorem \ref{theorem1} and let $\mathcal{B}$ be the sub-algebra of $\mathcal{S}(\Sigma,\Q(A))$ generated by $\{\gamma_j\}_{j \in I}$.
\begin{lemma} \label{MCG}
 $\mathcal{B}$ is stable by the action of $\Gamma(\Sigma)$.
 \end{lemma}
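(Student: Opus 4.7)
The plan is to exploit that $\Gamma(\Sigma)$ is generated, by hypothesis (2), by the Dehn twists $t_{\gamma_j}$ for $j \in I$, and that each $t_{\gamma_j}$ acts on $\mathcal{S}(\Sigma,\Q(A))$ as a $\Q(A)$-algebra automorphism. Since $\mathcal{B}$ is generated as a $\Q(A)$-algebra by $\{\gamma_j\}_{j \in I}$, stability of $\mathcal{B}$ under an algebra automorphism reduces to checking the automorphism sends each generator into $\mathcal{B}$. Thus it suffices to verify that $t_{\gamma_j}^{\epsilon}(\gamma_k) \in \mathcal{B}$ for every $j,k \in I$ and $\epsilon \in \{-1,+1\}$.

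By hypothesis (1), for every pair $(j,k)$ we have $\iota(\gamma_j,\gamma_k) \in \{0,1\}$, so there are just two cases. If $\iota(\gamma_j,\gamma_k) = 0$, then $\gamma_k$ may be isotoped off a regular neighborhood of $\gamma_j$, so $t_{\gamma_j}^{\epsilon}(\gamma_k) = \gamma_k \in \mathcal{B}$. If $\iota(\gamma_j,\gamma_k) = 1$, then Lemma \ref{sk} gives
$$t_{\gamma_j}^{\epsilon}(\gamma_k) \;=\; \frac{A^{\epsilon}\gamma_j \gamma_k - A^{-\epsilon}\gamma_k \gamma_j}{\epsilon(A^2 - A^{-2})},$$
which is a $\Q(A)$-linear combination of products of the generators $\gamma_j, \gamma_k$, hence again in $\mathcal{B}$.

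Once both cases are established, we conclude that $t_{\gamma_j}^{\pm 1}(\mathcal{B}) \subset \mathcal{B}$ for every $j$; combined with the fact that these twists generate $\Gamma(\Sigma)$ and act by algebra automorphisms (so are invertible on $\mathcal{S}(\Sigma,\Q(A))$), this yields $\varphi(\mathcal{B}) = \mathcal{B}$ for every $\varphi \in \Gamma(\Sigma)$. There is no serious obstacle here: the whole argument hinges on the intersection-at-most-once hypothesis, which restricts the skein computation to the single case covered by Lemma \ref{sk}; without that hypothesis, one would need to expand $t_{\gamma_j}^{\epsilon}(\gamma_k)$ in higher intersection configurations and worry about non-generators appearing.
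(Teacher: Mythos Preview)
Your proof is correct and follows essentially the same approach as the paper: reduce to the generating Dehn twists, then use hypothesis (1) to split into the disjoint and single-intersection cases, invoking Lemma~\ref{sk} for the latter. The paper's version is slightly terser but the argument is identical.
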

 \begin{proof}
 Since $\{ t_{\gamma_j} \}_{j \in I}$ generate $\Gamma(\Sigma)$, it enough to prove that for any $j,k \in I$ we have $t_{\gamma_j}^{\pm 1}(\gamma_k) \in \mathcal{B}$. If $\gamma_j$ does not intersect $\gamma_k$ then $t_{\gamma_j}^{\pm 1}(\gamma_k) = \gamma_k \in \mathcal{B}$. Now if $\gamma_j$ intersects $\gamma_k$ once then, by Lemma \ref{sk}, we have $$t_{\gamma_j}^{\pm 1}(\gamma_k)= \frac{A^{\pm 1} \gamma_j \gamma_k-A^{\mp 1} \gamma_k \gamma_j}{\pm(A^2-A^{-2})} \in \mathcal{B}$$
\end{proof}

\begin{lemma}\label{nonsep}
If  $\mathcal{C}_0$ denotes the set of non separating simple closed curves then $\mathcal{C}_0 \subset \mathcal{B}$. 
\end{lemma}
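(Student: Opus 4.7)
The plan is to invoke the classical \emph{change of coordinates principle} for mapping class groups: on an orientable surface (possibly with boundary, where mapping classes fix the boundary pointwise), the mapping class group $\Gamma(\Sigma)$ acts transitively on the set of isotopy classes of non-separating simple closed curves. This is a standard fact, treated for instance in Farb--Margalit.

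Given this, the argument is short. Fix any $j_0 \in I$; by hypothesis of Theorem \ref{theorem1}, $\gamma_{j_0}$ is non-separating, and trivially $\gamma_{j_0} \in \mathcal{B}$. Now let $\alpha \in \mathcal{C}_0$ be an arbitrary non-separating simple closed curve on $\Sigma$. By the change of coordinates principle, there exists $\phi \in \Gamma(\Sigma)$ with $\phi(\gamma_{j_0}) = \alpha$ (as isotopy classes of simple closed curves, hence also as elements of $\mathcal{S}(\Sigma,\Q(A))$ once we thicken them into the $[1/2,2/3]$ level as in the conventions established earlier). By the previous lemma, $\mathcal{B}$ is stable under the action of $\Gamma(\Sigma)$, so $\alpha = \phi(\gamma_{j_0}) \in \mathcal{B}$.

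There is essentially no obstacle here beyond correctly quoting the transitivity statement: the real work has already been done in the preceding lemma (stability of $\mathcal{B}$ under $\Gamma(\Sigma)$), which reduced the problem to producing a single non-separating curve in $\mathcal{B}$ and then sweeping it around by the mapping class group. The only small point to double-check is that transitivity holds in our setting (genus at least one, at most one boundary component, fixing the boundary), which is indeed the case.
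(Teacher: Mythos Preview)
Your proof is correct and is essentially identical to the paper's own argument: the paper also picks a non-separating generator (it uses $\gamma_1$), invokes transitivity of the $\Gamma(\Sigma)$-action on non-separating curves to write any $\gamma\in\mathcal{C}_0$ as $\phi(\gamma_1)$, and then appeals to the preceding lemma on stability of $\mathcal{B}$. Your additional remarks about the change of coordinates principle and the boundary case are accurate but not needed beyond what the paper assumes.
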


\begin{proof}
Let $\gamma \in \mathcal{C}_0$, there exists $\phi \in \Gamma(\Sigma)$ such that $\phi(\gamma_1) = \gamma$. Since $\gamma_1$ belongs to $\mathcal{B}$ which is stable by the action of $\Gamma(\Sigma)$ (see previous lemma), we have $\gamma \in \mathcal{B}$.

\end{proof}

\begin{lemma}\label{sep}
If  $\mathcal{C}_1$ denotes the set of  separating simple closed curves then $\mathcal{C}_1 \subset \mathcal{B}$. 

\end{lemma}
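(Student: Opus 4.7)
The plan is to combine $\Gamma(\Sigma)$-invariance of $\mathcal{B}$ (from the previous lemma) with the change-of-coordinates principle: $\Gamma(\Sigma)$ acts transitively on isotopy classes of separating simple closed curves of a fixed topological type. Since $\mathcal{B}$ is $\Gamma(\Sigma)$-stable, it is enough to produce one representative of each type in $\mathcal{B}$. A separating curve $\gamma \subset \Sigma$ has its topological type determined by the genus $k \geq 1$ of the subsurface it bounds that does not contain $\partial \Sigma$ (either side if $\Sigma$ is closed), and under our hypotheses there are only finitely many such types.

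I would then induct on $k$. For the base case $k = 1$, take two non-separating curves $\alpha, \beta \in \mathcal{C}_0$ intersecting once, so that a regular neighborhood $T := N(\alpha \cup \beta)$ is a once-punctured torus and $\partial T$ is a genus-$1$ separating curve. A direct computation using Lemma \ref{sk} and the Kauffman relation (\ref{KI}) expresses $\partial T$ as an explicit non-commutative polynomial in $\alpha$ and $\beta$; this is the quantum analogue of the Fricke identity $\tr [a,b] = \tr(a)^2 + \tr(b)^2 + \tr(ab)^2 - \tr(a)\tr(b)\tr(ab) - 2$, and is part of the presentation of the skein algebra of the once-punctured torus computed in \cite{BP}. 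Since $\alpha, \beta \in \mathcal{C}_0 \subset \mathcal{B}$ and $\mathcal{B}$ is closed under products, this places $\partial T$ in $\mathcal{B}$.

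For the inductive step $k \geq 2$, I would choose a representative $\gamma_k$ whose bounded subsurface $\Sigma_k$ decomposes as $\Sigma_{k-1} \cup P$ along a genus-$(k-1)$ separating curve $\gamma_{k-1}$ (so $\gamma_{k-1} \in \mathcal{B}$ by induction), where $P$ is a twice-punctured torus with $\partial P = \gamma_{k-1} \cup \gamma_k$. Pick two non-separating curves $a, b \subset P$ intersecting once, so $a, b \in \mathcal{C}_0 \subset \mathcal{B}$. By the presentation of the skein algebra of the twice-punctured torus in \cite{BP}, the boundary component $\gamma_k$ can be expressed as a polynomial in $a$, $b$, and $\gamma_{k-1}$; since each of these already lies in $\mathcal{B}$, so does $\gamma_k$.

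The main obstacle is extracting the precise skein identity from \cite{BP} in the inductive step and verifying that only two interior non-separating curves together with one boundary component suffice, with no additional separating generator required. Should the identity turn out to demand extra separating curves inside $P$, the fallback is to peel genus-$1$ handles off $\Sigma_k$ one at a time: each peel introduces an intermediate separating curve of genus-$k'$ type with $k' < k$ that is absorbed by the inductive hypothesis, reducing every step of the computation to an iterated application of the $k=1$ formula already handled.
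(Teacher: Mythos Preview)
Your approach is essentially the paper's: reduce via $\Gamma(\Sigma)$-stability and change of coordinates to one representative $\delta_k$ per genus type, then express each inductively using a skein identity from \cite{BP} in the twice-punctured torus. The paper handles base case and inductive step uniformly via the single relation $z_j' z_j = A^2 x_j^2 + A^{-2} y_j^2 + \delta_j + \delta_{j-1}$ (with $\delta_0 := -A^2-A^{-2}$ and $z_j,z_j'$ the two resolutions of $x_j y_j$), so the obstacle you flag does not arise.
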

\begin{proof}
Suppose that the genus of $\Sigma$ is $g \ge 1$. Let $\delta_1,\ldots,\delta_g$ be the following curves 
$$\includegraphics[scale = 0.45]{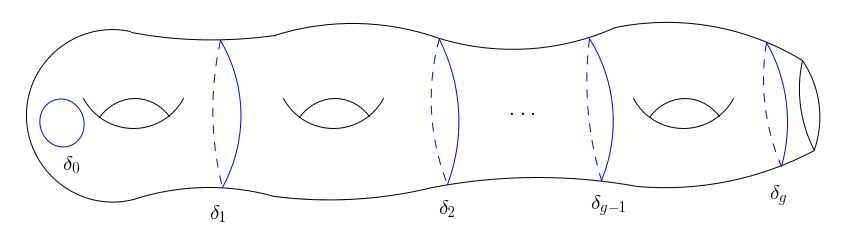}$$
where $\delta_g$ is trivial when $\Sigma$ does not have boundary. Let $j \in \{1,\ldots,g\}$ and let $z_j,z_j'$ be the two following non separating curves in the torus with two boundary components defined $\delta_j,\delta_{j-1}$ : 
$$\includegraphics[scale = 0.45]{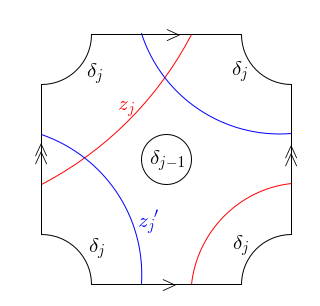}$$
By applying the skein relations we have :
$$ z_j' z_j = A^2 x_j x_j'+A^{-2} y_j y_j'+ \delta_j+ \delta_{j-1}$$ where $\delta_0= -A^2-A^{-2}$ and $x_j,x_j',y_j,y_j'$ are non separating curves. By Lemma \ref{nonsep}, $z_j', z_j, x_j,x_j',y_j,y_j' \in \mathcal{B}$ so by an induction on $j$ we can prove that for all $ 1 \le j \le g$ we have $\delta_j \in \mathcal{B}$.

Now if $\gamma$ is a separating curve, there exists $\phi \in \Gamma(\Sigma)$ and $j_0$ such that $\gamma = \phi(\delta_{j_0})$. Since $\mathcal{B}$ is stable by the action of $\Gamma(\Sigma)$, we have $\gamma \in\mathcal{B}$.

\end{proof}

\begin{proof}[Proof of Theorem \ref{theorem1}] 
$\mathcal{S}(\Sigma,\Q(A))$ is algebraically generated by simple closed curves. Hence combining Lemma \ref{nonsep} and Lemma \ref{sep}, we can conclude. Moreover if $\xi$ is a non zero complex number such that $\xi^2-\xi^{-2} \neq 0$, we can still conclude by Lemmas \ref{MCG}, \ref{nonsep} and \ref{sep} that $\mathcal{B} = S(\Sigma) \underset{A=\xi}{\otimes} \C$.

\end{proof}

\section{Interpretation of the relations in the skein algebra}
Let $\{ \gamma_j \}_{j \in I}$ be a set of simple closed curves on $\Sigma$ satisfying the hypothesis of Theorem \ref{theorem1}. Recall that $\mathbb{Q}(A) \langle I \rangle $ is the free non commutative $\mathbb{Q}(A)$-algebra generated by $\{ X_j \}_{j \in I}$. 
\begin{definition}
For $X,Y \in \mathbb{Q}(A) \langle I \rangle $ we define $[X,Y]_A := AXY - A^{-1} YX$.
\end{definition}
Recall that the maps $\{T_j^{\epsilon} \}_{j \in I}$ are defined by (\ref{eq1}) and (\ref{eq2}) in the introduction. Recall also that given a presentation of $\bar{\Gamma}(\Sigma)$ with respected to the generating set $\{t_{\gamma_j} \}_{j \in I}$, we defined $\mathcal{A}(\Gamma(\Sigma))$ via Equation (\ref{eq3}) (see the introduction). By definition, any relation satisfied by the $\{t_{\gamma_j} \}_{j \in I}$ (which may not appear in the given presentation) gives some relation in $\mathcal{A}(\Gamma(\Sigma))$.
Let us focus on the following relations: 
\begin{align}
&T_j^{-1} T_i^{-1} T_j^{-1} T_i T_j T_i X_{a} -X_a=0,  \quad \text{for} \, \,  \iota (\gamma_i , \gamma_j)= 1 \, \text{and} \, a \in I \label{braid} \\
& T_j T_{j}^{-1} X_i-X_i=0, \quad \quad \quad  \quad  \quad \, \, \,  \, \, \, \, \, \text{for}\, \, i,j \in I   \label{inv}  \\
&X_i X_j - X_j X_i=0,  \quad  \quad \quad \quad  \quad  \, \, \,  \, \,  \, \, \,   \,  \text{for} \, \,  \iota (\gamma_i , \gamma_j)= 0 \label{com}
\end{align}
Note that these relations hold but are not a complete set of relations in $\mathcal{A}(\Gamma(\Sigma))$. The first relation comes from the braid relations in the mapping class group. 
\begin{proposition} \label{braiding}
In $\mathcal{A}(\Gamma(\Sigma))$, the relation (\ref{braid}), (\ref{inv}) and (\ref{com}) are equivalent to:
\begin{align}
[[X_j,X_i]_A,X_j]_A = X_i ,  & \quad \text{for} \, \,  \iota (\gamma_i , \gamma_j)= 1  \label{coxeter} \\
X_i X_j - X_j X_i=0,  & \quad \text{for} \, \,  \iota (\gamma_i , \gamma_j)= 0 \label{com1}
\end{align}
\end{proposition}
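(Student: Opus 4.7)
The plan is to read the proposition as the equality of two bi-ideals of the free algebra $\mathbb{Q}(A)\langle I \rangle$: the bi-ideal $\mathcal{R}_1$ generated by (\ref{braid}), (\ref{inv}), (\ref{com}), and the bi-ideal $\mathcal{R}_2$ generated by (\ref{coxeter}) and (\ref{com1}). I would establish $\mathcal{R}_1 = \mathcal{R}_2$ via the two inclusions.

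The inclusion $\mathcal{R}_2 \subseteq \mathcal{R}_1$ is quick. First, (\ref{com1}) coincides with (\ref{com}). For (\ref{coxeter}), the key point is that it is literally the relation (\ref{inv}) specialized to a pair of intersecting curves: starting from the definitions (\ref{eq1})--(\ref{eq2}) and expanding,
\[ T_j T_j^{-1}(X_i) = T_j([X_i, X_j]_A) = A [X_j, X_i]_A X_j - A^{-1} X_j [X_j, X_i]_A = [[X_j, X_i]_A, X_j]_A \]
whenever $\iota(\gamma_i, \gamma_j) = 1$, so (\ref{inv}) at such a pair is exactly (\ref{coxeter}).

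For the reverse inclusion $\mathcal{R}_1 \subseteq \mathcal{R}_2$, the same observation gives (\ref{inv}) from (\ref{coxeter}) (the non-intersecting case being automatic), and (\ref{com}) $=$ (\ref{com1}). The substantive step is to derive the braid relation (\ref{braid}) from (\ref{coxeter}) and (\ref{com1}). Using (\ref{inv}), this reduces to showing
\[ T_i T_j T_i(X_a) \equiv T_j T_i T_j(X_a) \pmod{\mathcal{R}_2} \]
for all $a \in I$ when $\iota(\gamma_i, \gamma_j) = 1$. I would verify this by case analysis on how $\gamma_a$ meets $\gamma_i, \gamma_j$. The disjoint case is trivial. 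The cases $a = i$ and $a = j$ take a few lines using the key identity $T_j T_i(X_j) = [[X_j, X_i]_A, X_j]_A = X_i$, itself a direct rewriting of (\ref{coxeter}). When $\gamma_a$ meets exactly one of $\gamma_i, \gamma_j$, say $\iota(\gamma_a, \gamma_i) = 0$ and $\iota(\gamma_a, \gamma_j) = 1$, one computes $T_i T_j T_i(X_a) = [[X_i, X_j]_A, X_a]_A$ and $T_j T_i T_j(X_a) = [X_i, [X_j, X_a]_A]_A$, whose difference collapses to $(X_i X_a - X_a X_i) X_j - X_j (X_i X_a - X_a X_i)$ and vanishes by (\ref{com1}).

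The main obstacle is the case $\iota(\gamma_a, \gamma_i) = \iota(\gamma_a, \gamma_j) = 1$, in which no commutativity among $X_a, X_i, X_j$ is available. Using (\ref{coxeter}) on the pair $(i, j)$ to simplify $T_i([X_j, X_i]_A) = X_j$ and $T_j([X_i, X_j]_A) = X_i$, the calculation gives
\[ T_i T_j T_i(X_a) = [X_j, [R, S]_A]_A, \qquad T_j T_i T_j(X_a) = [X_i, [P, Q]_A]_A, \]
with $R = [X_i, X_j]_A$, $S = [X_i, X_a]_A$, $P = [X_j, X_i]_A$, $Q = [X_j, X_a]_A$. Both sides expand to polynomials of degree five in $X_a, X_i, X_j$, and their equality in the free algebra is false; it must be forced by (\ref{coxeter}) applied to each of the three intersecting pairs among $\{\gamma_a, \gamma_i, \gamma_j\}$. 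The approach I would take is to reduce both expressions to a common normal form by systematically rewriting any three-letter subword $X_\beta X_\alpha X_\beta$ (with $\iota(\gamma_\alpha, \gamma_\beta) = 1$) using the identity $(A^2 + A^{-2}) X_\beta X_\alpha X_\beta = X_\alpha + X_\alpha X_\beta^2 + X_\beta^2 X_\alpha$. The hard part is the sheer bookkeeping rather than finding any new relation.
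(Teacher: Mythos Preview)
Your approach coincides with the paper's: show that (\ref{inv}) for an intersecting pair is literally (\ref{coxeter}), then rewrite (\ref{braid}) as $T_iT_jT_i X_a = T_jT_iT_j X_a$ and verify this from (\ref{coxeter}) and (\ref{com1}). The paper's own proof is in fact considerably terser than yours: after observing the equivalence of (\ref{inv}) and (\ref{coxeter}), it simply asserts ``it is easy to check that this relation is implied by (\ref{coxeter}) and (\ref{com1})'' with no case analysis at all. Your breakdown into the cases $\iota(\gamma_a,\gamma_i),\iota(\gamma_a,\gamma_j)\in\{0,1\}$, including the explicit computations for the mixed case, goes beyond what the paper writes down; the remaining triple-intersection case that you flag as bookkeeping is likewise left implicit in the paper.
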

\begin{proof}
Let $i,j \in I$, note that if $\iota(\gamma_i,\gamma_j) =0$, the relation (\ref{inv}) in empty and if $ \iota(\gamma_i,\gamma_j) =1$, this relation gives  $[[X_j,X_i]_A,X_j]_A = X_i$.

Let $i,j \in I$ such that $\iota(\gamma_i,\gamma_j) =1$. Because of (\ref{inv}), the relation (\ref{braid}) can be rewritten
$$T_i T_j T_i X_k = T_j T_i T_j X_k$$  for all $k \in I$.
It is easy to check that this relation is implied by (\ref{coxeter}) and (\ref{com1}).
\end{proof}
\begin{remark}
We did not include $T_j^{-1} T_j X_ i = X_i$ in the relations defining $\mathcal{A}(\Gamma(\Sigma))$ because they give $[X_j,[X_i,X_j]_A]_A = X_i$ for $\iota(\gamma_i,\gamma_j) =1$ which is the same as $[[X_j,X_i]_A,X_j]_A = X_i$.
\end{remark}

\subsection{The case of the one-holed torus}
Let $\Sigma$ be a surface of genus one surface with one boundary component. Its mapping class group is the braid group $B_3$ whose presentation is $\langle t_1,t_2 \, | \, t_1 t_2 t_1 = t_2 t_1 t_2 \rangle$. Here is $t_1$ is Dehn twist along the canonical meridian of $\Sigma$ and $t_2$ is the Dehn twist around the longitude of $\Sigma$. Note that these two curves satisfy the hypothesis of Theorem \ref{theorem1}. The center of this group is the group generated by $(t_1 t_2 t_1)^2$ and $\bar{\Gamma}(\Sigma)$ is $\mathrm{PSL}_2(\Z)$ with following presentation $$\bar{\Gamma}(\Sigma) = \langle t_1,t_2 \,| \,t_1 t_2 t_1 = t_2 t_1 t_2, \, (t_1 t_2 t_1)^2=1\rangle$$  In this case $\mathcal{A}(\Gamma(\Sigma))$ is a non commutative algebra generated by $X_1$ and $X_2$. Because of Proposition \ref{braiding}, the only relations between $X_1$ and $X_2$ are:
$$ [X_1,[X_2,X_1]_A]_A = X_2, \, [X_2,[X_1,X_2]_A]_A = X_1,\, (T_1 T_2 T_1)^2X_1 = X_1, \, (T_1 T_2 T_1)^2 X_2 = X_2  $$
It is easy to check that the two last relations are implied by the two first one. Therefore $$\mathcal{A}(\Gamma(\Sigma)) = \langle X_1, X_2 | [X_1,[X_2,X_1]_A]_A = X_2, \, [X_2,[X_1,X_2]_A]_A = X_1 \rangle$$ From \cite[Thm 2.1]{B} the skein module of the one-holed torus is isomorphic to $\mathcal{A}(\Gamma(\Sigma))$. Therefore Conjecture \ref{c} holds for the one holed torus.

\end{document}